\newtheorem{theorem}{Theorem}[section]
\newcommand{\E}{\mathbb{E}}
\newcommand{\R}{\mathbb{R}}
\newcommand{\I}{\mathbb{I}}
\renewcommand{\P}{\mathbb{P}}
\newcommand{\N}{\mathbb{N}}
\providecommand{\keywords}[1]{\small\textbf{\textbf{Keywords:}} #1}
\providecommand{\MSC}[1]{\small \textbf{\textbf{MSC 2020:}} #1}
\title{Kac-Stroock type approximations for the Brownian motion}
\date{}
\author[]{Xavier Bardina\thanks{Corresponding author. \\Both authors are supported by the grant PID2021-123733NB-I00 from SEIDI, Ministerio de Econom\'ia y Competitividad. Salim Boukfal is also supported by the program of predoctoral grants AGAUR-FI (2025 FI-1 00119) Joan Or\'o from the Department of Research and Universities of the Government of Catalonia and the co-funding of the European Social Fund Plus (ESF+).}}
\author[]{Salim Boukfal}
\affil[]{Departament de Matemàtiques, Universitat Autònoma de Barcelona, Cerdanyola del Vallès, Spain}
\affil[]{xavier.bardina, salim.boukfal@uab.cat}
\begin{document}

\maketitle

\begin{abstract}
    In the present paper we show that the processes $X_n = \{X_n(t) \colon t \in [0,1]\}$, $n \in \mathbb{N}$, defined by $X_n(t) = \sqrt{n}C\int_0^t (-1)^{L(nu)} du$, where $L = \{L(t) \colon t \geq 0\}$ is a renewal processes whose inter-arrival times satisfy some integrability conditions and $C > 0$ is some normalizing constant, weakly converge, in the space of continuous functions over $[0,1]$, $\mathcal{C}([0,1])$, to the Brownian motion as $n$ approaches infinity. Thus, generalizing the result of D. W. Stroock in \cite{stroock1982lectures}, where $L$ is taken to be a standard Poisson process. In particular, we see that these results are a mere consequence of Donsker's invariance principle.
\end{abstract}

\keywords{Brownian motion, renewal process, weak convergence, Kac-Stroock}

\MSC{60F05, 60F17, 60G50, 60K05}

\section{Introduction}

It is well know (see \cite{stroock1982lectures}) that, if $N = \{N(t) \colon t \geq 0\}$ is a standard Poisson process, the processes $X_n$ defined by
\begin{equation}\label{kac-stroock}
    X_n(t) = \sqrt{n}\int_0^t (-1)^{N_n(u)}du, \quad t \in [0,1],
\end{equation}
which were introduced by M. Kac in \cite{articlekac} in order to solve the telegrapher's equation, weakly converge, in the space of continuous functions over $[0,1]$ (from now on, $\mathcal{C}$) towards the standard Brownian motion. Since then, several generalizations of this result have been seen. For instance:
\begin{enumerate}
    \item By noting that $-1 = e^{i \theta}$ with $\theta = \pi$ and consider any other angle $\theta \in (0\pi)\cup(\pi,2\pi)$, showing convergence towards the Brownian sheet (see \cite{bardina1}).
    \item Replacing the Poisson process $N$ for any other Lévy process, showing convergence towards the Brownian sheet (see \cite{BardinaRovira2016}).
    \item By considering the multiparameter analogs of the Poisson process and, more generally, of the Lévy Process and showing that the corresponding processes converge towards a Brownian sheet or a complex Brownian sheet (see \cite{BARDINAJuanPabloQuer, BARDINARoviraJolis}).
    \item By considering integrals of functions with respect to such processes and showing convergence towards the stochastic integrals with respect to the Brownian motion or Brownian sheet (see \cite{delgadoJolis, BARDINAJOLISTUDOR, bardinaboukfal1, bardinaboukfal2}).
    \item By modifying the Poisson process or by considering a renewal-reward process (with a very specific reward) in such a way that the obtained convergence is in the strong sense (see \cite{griegoheathmoncayo,BardinaRovirarenewal}).
\end{enumerate}
In order to show the weak convergence in the first four extensions, the authors show that the sequence of stochastic processes is tight and that the finite dimensional distributions or that any weak limit of any subsequence converge as desired.\\
As for the strong convergence in the fifth point, the authors show that, via Skorokhod's embedding theorem, there are some realizations of the processes converging almost surely towards the Brownian motion.

Inspired by the results exhibited in \cite{BardinaRovirarenewal}, in this paper we provide a further extension of the original theorem where the Poisson process $N$ is replaced by a general renewal process whose inter-arrival times verify some integrability condition. Moreover, we show that the obtained convergence (and thus, the convergence when a Poisson process is considered) can be seen as a mere consequence of Donsker's invariance principle and hence, that these processes are closely related to the random walk.

\section{Statement and proof of the main result}

Let $\{U_k \colon k \in \N\}$ be a sequence of i.i.d. non-negative random variables such that $\P\{U_1 = 0\} < 1$ and that $\E[U_1^p] < \infty$ for some $p > 2$. Define $S_0 = 0$ and, for $n \in \N$ and $t \geq 0$,
\begin{equation*}
    S_n = \sum_{k=1}^n U_k, \quad L(t) = \sum_{n=1}^\infty \I_{[0,t]}(S_n),
\end{equation*}
where $\I_A$ is the indicator of the set $A$ (that is, $\I_A(x) = 1$ if $x \in A$ and $\I_A(x)=0$ otherwise). $L = \{L(t) \colon t \geq 0\}$ is usually called a renewal process with arrival times $\{S_n \colon n \in \N\}$ or, equivalently, with inter-arrival times $U_n = S_n - S_{n-1}$, $n \in \N$.

The main result of this paper reads as follows.

\begin{theorem}\label{main theorem}
    The processes $X_n = \{X_n(t) \colon t \in [0,1]\}$, $n \in \N$, defined by
    \begin{equation}\label{processos salim}
        X_n(t) = C\sqrt{n} \int_0^t (-1)^{L(n u)} du,
    \end{equation}
    where $C^2 = \E[U_1]/\text{Var}(U_1)$, weakly converge, in $\mathcal{C}$, towards a standard Brownian motion as $n$ approaches infinity.
\end{theorem}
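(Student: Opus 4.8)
The plan is to reduce the statement to the classical Donsker invariance principle by a change of variables, a pairing of the inter-arrival times into i.i.d.\ increments, and a deterministic time change governed by the renewal law of large numbers. First I would substitute $v = nu$ to write
\[
X_n(t) = \frac{C}{\sqrt{n}} \int_0^{nt} (-1)^{L(v)} \, dv =: \frac{C}{\sqrt n}\, Y(nt),
\]
and observe that $(-1)^{L(v)}$ equals the constant $(-1)^k$ on each interval $[S_k, S_{k+1})$. Hence, writing $m = L(nt)$,
\[
Y(nt) = \sum_{k=1}^{m} (-1)^{k-1} U_k + (-1)^{m}\bigl(nt - S_{m}\bigr),
\]
where the incomplete-interval term is bounded in absolute value by $U_{m+1}$. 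The alternating sum is then grouped into pairs: setting $W_j := U_{2j-1} - U_{2j}$ and $J_n(t) := \lfloor L(nt)/2 \rfloor$, one has $\sum_{k=1}^{m}(-1)^{k-1}U_k = \sum_{j=1}^{J_n(t)} W_j + \varepsilon_n(t)$, with a parity correction $\varepsilon_n(t)$ that vanishes when $m$ is even and equals $U_{m}$ otherwise. The variables $\{W_j\}$ are i.i.d.\ and centred (the $U_k$ being identically distributed), with $\text{Var}(W_1) = 2\,\text{Var}(U_1)$, finite because $p > 2$.

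Next I would apply Donsker's theorem to the walk $S^W_m = \sum_{j=1}^m W_j$: the rescaled, linearly interpolated partial-sum process $G_n$, determined by $G_n(j/n) = \tfrac{C}{\sqrt n} S^W_j$, converges weakly in $\mathcal{C}$ to $C\sqrt{2\,\text{Var}(U_1)}\, B$ for a standard Brownian motion $B$. To reinsert the renewal clock, I would invoke the uniform renewal law of large numbers: since $L(s)/s \to 1/\E[U_1]$ a.s.\ and the maps $t \mapsto L(nt)/n$ are nondecreasing with continuous limit, the convergence is uniform on $[0,1]$, so that $\phi_n(t) := J_n(t)/n \to t/(2\E[U_1]) =: \phi(t)$ uniformly in probability. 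Because $\phi_n(t)$ is a grid point we have exactly $G_n(\phi_n(t)) = \tfrac{C}{\sqrt n} S^W_{J_n(t)}$, and since the limiting time change $\phi$ is deterministic, the joint convergence $(G_n,\phi_n) \Rightarrow (C\sqrt{2\,\text{Var}(U_1)}\,B,\phi)$ follows from the weak convergence of $G_n$ together with the convergence in probability of $\phi_n$ to a constant. The random-time-change (composition) theorem then gives $G_n\circ\phi_n \Rightarrow C\sqrt{2\,\text{Var}(U_1)}\, B(\,\cdot/(2\E[U_1]))$, a limit whose variance at time $t$ equals $C^2\,\text{Var}(U_1)\,t/\E[U_1] = t$ by the choice $C^2 = \E[U_1]/\text{Var}(U_1)$; being a continuous centred Gaussian process with independent increments and this covariance, it is a standard Brownian motion.

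Finally I would control the discarded terms. Both the incomplete-interval remainder and the parity correction are dominated by $\max_{k \le L(n)+1} U_k$, and since $L(n) \le Kn$ with high probability for any $K > 1/\E[U_1]$, the bound $\P\{\max_{k\le Kn} U_k > \varepsilon \sqrt n\} \le Kn\,\E[U_1^p]/(\varepsilon^p n^{p/2}) \to 0$, valid precisely because $p > 2$, yields $\tfrac{C}{\sqrt n}\sup_{t\in[0,1]}|\text{remainder}| \to 0$ in probability. Writing $X_n = G_n\circ\phi_n + R_n$ with $\sup_t|R_n|\to 0$ in probability and invoking the converging-together (Slutsky) lemma in $\mathcal{C}$ then concludes the argument. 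I expect the main obstacle to be exactly this uniform-in-$t$ negligibility of the boundary terms, where the hypothesis $p>2$ is essential, together with the care needed to legitimise the random time change; the conceptual point that collapses everything to Donsker is that the dependence between the walk $(W_j)$ and the clock $(L(n\cdot))$, built from the very same variables $U_k$, is harmless because the limiting time change is deterministic, arising from a law of large numbers rather than from a second order of fluctuation.
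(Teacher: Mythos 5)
Your proposal follows essentially the same route as the paper's proof: the identical decomposition of $X_n(t)$ into the alternating sum $\frac{C}{\sqrt{n}}\sum_{k=1}^{L(nt)}(-1)^{k-1}U_k$ plus the incomplete-interval remainder, the same pairing $U_{2j-1}-U_{2j}$ into i.i.d.\ centred increments of variance $2\text{Var}(U_1)$, Donsker's theorem for the paired walk, composition with the renewal clock (harmless because its limit $t/(2\E[U_1])$ is deterministic, by the renewal law of large numbers), and negligibility of the boundary terms, which is where $p>2$ enters. Your handling of the boundary terms (union bound plus Markov's inequality on the event $\{L(n)\le Kn\}$) is in fact slightly more direct than the paper's ($L^1$ bound on $\max_{i\le n+1}U_i$ followed by a time change).

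Two technical points are glossed over, however, and they are precisely the ones the paper spends care on. First, your time change $\phi_n(t)=\lfloor L(nt)/2\rfloor/n$ need not take values in $[0,1]$: if $\E[U_1]<1/2$, then $\phi_n(1)\to 1/(2\E[U_1])>1$ almost surely, so the composition $G_n\circ\phi_n$ with a Donsker process $G_n$ defined on $[0,1]$ is undefined on an event of probability tending to one. You must either run Donsker on $[0,T]$ for some $T>1/(2\E[U_1])$ (or on $[0,\infty)$), or rescale the $U_k$ at the outset --- the latter being exactly the purpose of the paper's reduction, without loss of generality, to $\E[U_1]>1$. Second, $\phi_n$ is a step function, so $G_n\circ\phi_n$, and hence also $R_n=X_n-G_n\circ\phi_n$, is c\`adl\`ag but not continuous; the converging-together step therefore cannot be performed ``in $\mathcal{C}$'' as you state. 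It does go through in $\mathcal{D}$ (the Skorokhod metric is dominated by the uniform one), and the conclusion then transfers to $\mathcal{C}$ because the $X_n$ and the limiting Brownian motion are continuous --- which is exactly how the paper finishes. Both repairs are routine, so these are fixable oversights rather than fatal flaws; the core of your argument is sound and coincides with the paper's.
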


Before we prove this result, we shall see why the hypotheses required for the proof of Theorem \ref{main theorem} are less restrictive than the ones seen in \cite{BardinaRovirarenewal} and why these are not enough (not a priori, at least) to show the strong convergence of these processes (or a slight modification of them by considering some specific reward) towards the Brownian motion.

We start by noticing that the renewal process $L$ almost surely vanishes in a neighborhood of $0$, meaning that the sample paths of the processes $X_n$ always start increasing. In order to avoid this, in \cite{BardinaRovirarenewal} the authors introduce a sequence of i.i.d. Bernoulli random variables $\{\eta_k \colon k \in \N \cup {0}\}$ of parameter $1/2$ and define the renewal-reward process
\begin{equation*}
    T(t) = \sum_{k=0}^{L(t)}\eta_k,
\end{equation*}
which can take the value $1$ at $t = 0$ implying that the processes
\begin{equation*}\label{processos bardina rovira}
        \Tilde{X}_n(t) = \frac{C}{\sqrt{\beta(n)}} \int_0^t (-1)^{T(u/\beta(n))} du, \quad t \in [0,1],
\end{equation*}
where $\beta \colon \N \to \R$ is some strictly positive sequence (in our case, $\beta(n) = 1/n$) converging to 0 and $C$ is some normalizing constant different from the one in \eqref{processos salim}, need not be increasing at $t=0$.\\
In addition, one can see that the strong convergence displayed in \cite{griegoheathmoncayo, BardinaRovirarenewal} is a consequence of the Borel-Cantelli lemma, for which one needs to impose further conditions on the sequence $\{\beta(n) \colon n \in \N\}$. More precisely, one requires this sequence to be summable, this in particular implies that, contrary to what we will be seeing, the results seen in \cite{BardinaRovirarenewal} need not be valid, for instance, for $\beta(n) = 1/n$ or for some general $\beta(n)$ approaching 0 from above. Furthermore, in order to apply Borel-Cantelli's lemma, the authors also need to make use of some of the bounds provided by the Skorokhod embedding theorem, for which one requires the random variables $U_k$ to have fourth order moments, whilst we only need them to have moments of order $p > 2$.

\begin{proof}[Proof of Theorem \ref{main theorem}]

    Without any loss of generality, we shall assume that $\mu = \E[U_1] > 1$, the general case is a matter of scaling the random variables $U_k$. Indeed, let us assume that we have shown the result for $\mu > 1$, and consider a general sequence $\{U_k \colon k \in \N\}$ verifying the hypotheses for which the theorem holds. For any $\rho \in (0,1)$, the variables $\Tilde{U}_k = U_k/(\rho \mu)$ have mean $1/\rho > 1$, so that the theorem can be applied to the renewal process
    \begin{equation*}
        \Tilde{L}(t) = \sum_{k=1}^\infty \I_{[0,t]}(\Tilde{U}_1 + ... + \Tilde{U}_k) = L(t \rho \mu).
    \end{equation*}
    Hence, the processes
    \begin{equation*}
        \Tilde{X}_n(t) = \sqrt{\frac{n\E[\Tilde{U}_1]}{\text{Var}(\Tilde{U}_1)}} \int_0^t (-1)^{\Tilde{L}(nu)} du = \sqrt{\frac{n}{\rho \text{Var}(U_1)}} \int_0^{t \rho \mu} (-1)^{L(nu)} du, \quad t \in [0,1],
    \end{equation*}
    will converge to a standard Brownian motion, meaning that
    \begin{equation*}
        X_n(t) = \sqrt{\frac{n\E[U_1]}{\text{Var}(U_1)}} \int_0^t (-1)^{L(nu)} du = \sqrt{\rho \mu} \Tilde{X}_n\left( \frac{t}{\rho \mu} \right), \quad t \in [0,1],
    \end{equation*}
    will converge towards a standard Brownian motion as well.
    
    In order to prove the result for $\mu > 1$, we start by noticing that $X_n$ can be written as 
    \begin{equation}\label{xn random sum}
        X_n(t) = \frac{C}{\sqrt{n}} \sum_{j=1}^{L(n t)} (-1)^{j-1}U_j + \frac{C}{\sqrt{n}}(-1)^{L(nt)}\left( nt - S_{L(nt)}\right).
    \end{equation}
    We will show that
    \begin{equation*}
        W_n(t) = \frac{C}{\sqrt{n}} \sum_{j=1}^{L(n t)} (-1)^{j-1}U_j
    \end{equation*}
    and
    \begin{equation*}
        R_n(t) =  \frac{C}{\sqrt{n}}(-1)^{L(nt)}\left( nt - S_{L(nt)}\right)
    \end{equation*}
    converge, respectively, towards a Brownian motion and the null process in the space of càdlàg functions over $[0,1]$ (which we denote by $\mathcal{D}$) with respect to the Skorokhod topology (unless stated otherwise, weak convergence in $\mathcal{D}$ will mean convergence with respect to this topology). If we manage to show this, since $R_n$ will converge to $0$, we will have that $X_n$ will converge towards the Brownian motion in $\mathcal{D}$. Since $X_n$ has continuous paths and the convergence in $\mathcal{D}$ relativized to $\mathcal{C}$ coincides with the weak convergence in $\mathcal{C}$ (with the uniform topology), we will obtain the weak convergence in $\mathcal{C}$ of the processes $X_n$.

    Now, in order to see that $R_n$ converges to $0$ in $\mathcal{D}$, we first note that, for any $t \geq 0$ and $n \in \N$, $S_{L(nt)} \leq nt < S_{L(nt)+1}$, so that
    \begin{equation*}
        |R_n(t)| \leq \Tilde{R}_n(t) \coloneqq \frac{C}{\sqrt{n}} U_{L(nt) + 1}.
    \end{equation*}
    In particular, $\sup_{t} |R_n(t)| \leq \sup_{t}\Tilde{R}_n(t)$. Observe that if we show that $\Tilde{R}_n$ goes to $0$ in $\mathcal{D}$, then so will $R_n$. Indeed, recall that the map $\mathcal{D} \ni x \mapsto \sup_t |x(t)|$ is continuous with respect to the Skorokhod topology, meaning that, by the continuous mapping theorem, $\sup_t \Tilde{R}_n(t)$ will weakly converge (in $\R$) towards 0 as well and thus, since $0$ is a constant, we will have convergence in probability, giving us that, for any $\varepsilon > 0$
    \begin{equation*}
        \P\left\{\sup_{0 \leq t \leq 1}|R_n(t)| \geq \varepsilon \right\} \leq \P\left\{\sup_{0 \leq t \leq 1}\Tilde{R}_n(t) \geq \varepsilon\right\} \xrightarrow{n \to \infty} 0.
    \end{equation*}
    That is, $R_n$ will converge towards $0$ in probability and in the space $\mathcal{D}$ with the uniform topology, which, in turn, will imply convergence in law in the same space with the same topology. However, in $\mathcal{D}$, the topology induced by the uniform metric is finer than the topology induced by the Skorokhod metric (see, for instance, \cite{billingsleyantinc}, pp. 150-151), implying that $R_n$ will converge to $0$ in $\mathcal{D}$ with the Skorokhod topology.\\
    All in all, what we have seen is that it suffices to show that $\Tilde{R}_n$ converges to $0$ as a sequence of processes in $\mathcal{D}$ (with the uniform or the Skorokhod topology).

    To do so, consider the processes$\{\Tilde{\Tilde{R}}_n \colon n \in \N\}$ defined by
    \begin{equation*}
        \Tilde{\Tilde{R}}_n(t) = \frac{C}{\sqrt{n}}U_{[nt]+1}
    \end{equation*}
    and note that
    \begin{equation*}
        \sup_{0 \leq t \leq 1} \Tilde{\Tilde{R}}_n = \frac{C}{\sqrt{n}}\max_{1 \leq i \leq n+1} U_{i}.
    \end{equation*}
    Next, by Jensen's inequality,
    \begin{align*}
       \E&\left[ \max_{1 \leq i \leq n+1} U_{i} \right] = \E\left[ \left(\max_{1 \leq i \leq n+1} U_{i}^p \right)^{\frac{1}{p}} \right] 
       \leq \left( \E\left[ \max_{1 \leq i \leq n+1} U_{i}^p \right]  \right)^{\frac{1}{p}} \\
       &\leq \left( \E\left[ \sum_{i=1}^{n+1} U_{i}^p \right]  \right)^{\frac{1}{p}} 
       = \left( \sum_{i=1}^{n+1}\E\left[ U_{i}^p \right]  \right)^{\frac{1}{p}} 
       = (n+1)^{\frac{1}{p}} \left( \E[U_1^p] \right)^{\frac{1}{p}},
    \end{align*}
    where, in the last step, we have used that the random variables $U_k$ are identically distributed. Since $p > 2$, this means that $\Tilde{\Tilde{R}}_n$ converges in $L^1$ to 0 in $\mathcal{D}$ with the uniform topology. In particular, it will weakly converge in the same space with the same topology.

    Now let us define
    \begin{equation}\label{canvi de temps}
        \Phi_n(t) = \begin{cases}
            \frac{L(nt)}{n}, \quad &\text{if }\frac{L(n)}{n} \leq 1,\\
            \frac{t}{\mu}, \quad &\text{otherwise}.
        \end{cases}
    \end{equation}
    It can be shown, (see the proof of Theorem 17.3 in \cite{billingsleyantinc}, page 149), that $\Phi_n$ weakly converges in $\mathcal{D}_0$ to $\phi$, where $\phi(t) = t/\mu$ and where $\mathcal{D}_0 \subset \mathcal{D}$ is the set of non-decreasing càdlàg functions whose image is contained in $[0,1]$ (in other words, the càdlàg changes of time of the unit interval) with the subspace topology. Under the event $L(n)/n \leq 1$ (the probability of which goes to 1), we have that $\Tilde{R}_n = \Tilde{\Tilde{R}}_n \circ \Phi_n$. Thus, both $\Tilde{R}_n$ and $\Tilde{\Tilde{R}}_n \circ \Phi_n$ have the same limit. Finally, by the continuous mapping theorem (for more details, see \cite[p. 145]{billingsleyantinc}), $\Tilde{\Tilde{R}}_n \circ \Phi_n$ will converge in $\mathcal{D}$ to $0 \circ \phi = 0$, showing that $\Tilde{R}_n$ converges to $0$ in $\mathcal{D}$ and that so does $R_n$.

    The only thing left to do is to show that $W_n$ converges to a standard Brownian motion. Again, this will be a consequence of the convergence in $\mathcal{D}$ of 
    \begin{equation*}
        \Tilde{W}_n(t) = \frac{C}{\sqrt{n}}\sum_{j=1}^{[nt]}(-1)^{j-1}U_j 
    \end{equation*}
    towards the Brownian motion alongside the fact that $W_n$ and $\Tilde{W}_n \circ \Phi_n$ (where $\Phi_n$ the same time change in \eqref{canvi de temps}) have the same limit. 
    
    Even though the random variables $(-1)^{j-1}U_j$ are independent, they are not, in general, identically distributed, so that we cannot apply directly the invariance principle. Nevertheless, observe that
    \begin{equation*}
        \Tilde{W}_n(t) = \frac{C}{\sqrt{n}}\sum_{j=1}^{\left[\frac{[nt]}{2}\right]}\left(U_{2j-1} - U_{2j} \right) + C\frac{1 - (-1)^{[nt]}}{2\sqrt{n}}U_{[nt]}, 
    \end{equation*}
    where the first sum is null if $[nt] < 2$. A similar reasoning to the one used to show that $\Tilde{\Tilde{R}}_n$ converges to $0$ shows that the second term in the latter expression for $\Tilde{W}_n$ converges to $0$ in $\mathcal{D}$ as well. Thus, it remains to show that the processes $\Tilde{\Tilde{W}}_n$ defined by
    \begin{equation*}
        \Tilde{\Tilde{W}}_n(t) =  \frac{1}{\sqrt{n}}\sum_{j=1}^{\left[\frac{[nt]}{2}\right]}\left(U_{2j-1} - U_{2j} \right), \quad t \in [0,1],
    \end{equation*}
    converge towards a Brownian motion. But the latter is an (almost) immediate consequence of Donsker's invariance principle. Indeed, we first note that the random variables $\{U_{2j-1} - U_{2j} \colon j \in \N\}$ are i.i.d. $U_1 - U_2$ and that
    \begin{equation*}
        \E[U_1 - U_2] = 0, \quad \E\left[\left(U_1 - U_2\right)^2\right] = 2\text{Var}(U_1).
    \end{equation*}
    Hence, the random walks
    \begin{equation*}
        B_n(t) = \frac{1}{\sqrt{n}} \sum_{j=1}^{[nt]} (U_{2j-1} - U_{2j})
    \end{equation*}
    will weakly converge in $\mathcal{D}$ towards a Brownian motion with variance $2\text{Var}(U_1)$. However, $\Tilde{\Tilde{W}}_n = B_n \circ \Psi_n$, where $\Psi_n(t) = \frac{[nt]}{2n}$ verifies
    \begin{equation*}
        \sup_{0 \leq t \leq 1} \left|\Psi_n(t) -  \psi(t)\right| \leq \frac{1}{2n} \xrightarrow{n \to \infty} 0,
    \end{equation*}
    where $\psi(t) = {t}/{2}$. Therefore, by the same results in \cite{billingsleyantinc} employed previously, $\Tilde{\Tilde{W}}_n$ weakly converges in $\mathcal{D}$ towards a Brownian motion with variance $\text{Var}(U_1)$ and hence, that $\Tilde{W}_n$ will converge to a Brownian motion with variance $C^2\text{Var}(U_1)$. Since $W_n$ has the same limit as $\Tilde{W}_n \circ \Phi_n$ and the latter converges towards a Brownian motion with variance $C^2\text{Var}(U_1)/\mu = 1$, we obtain the convergence in distribution of $W_n$ in $\mathcal{D}$ towards a standard Brownian motion and, all in all, that $X_n$ weakly converges, in $\mathcal{C}$ with the uniform topology, to the very same process, as was to be shown.
\end{proof}

\bibliographystyle{abbrvnat}
\bibliography{references.bib}

@InProceedings{bardina1,
author="Bardina, X.",
editor="Decreusefond, Laurent
and {\O}ksendal, Bernt K.
and {\"U}st{\"u}nel, Ali S{\"u}leyman",
title="The Complex Brownian Motion as a Weak Limit of Processes Constructed from a Poisson Process",
booktitle="Stochastic Analysis and Related Topics VII",
year="2001",
publisher="Birkh{\"a}user Boston",
address="Boston, MA",
pages="149--158",
abstract="In this paper we show an approximation in law of the complex Brownian motion by processes constructed from a unique standard Poisson process.",
isbn="978-1-4612-0157-1"
}

@Article{BardinaRovira2016,
author={Bardina, Xavier
and Rovira, Carles},
title={Approximations of a Complex Brownian Motion by Processes Constructed from a L{\'e}vy Process},
journal={Mediterranean Journal of Mathematics},
year={2016},
day={01},
volume={13},
number={1},
pages={469-482},
abstract={In this paper, we show an approximation in law of the complex Brownian motion by processes constructed from a stochastic process with independent increments. We give sufficient conditions to the characteristic function of the process with independent increments that ensure the existence of such an approximation. We apply these results to L{\'e}vy processes. Finally we extend these results to the m-dimensional complex Brownian motion.},
issn={1660-5454},
doi={10.1007/s00009-014-0472-4},
url={https://doi.org/10.1007/s00009-014-0472-4}
}

@article{BARDINAJuanPabloQuer,
title = {Weak approximation of the complex Brownian sheet from a Lévy sheet and applications to SPDEs},
journal = {Stochastic Processes and their Applications},
volume = {130},
number = {9},
pages = {5735-5767},
year = {2020},
issn = {0304-4149},
doi = {https://doi.org/10.1016/j.spa.2020.04.006},
url = {https://www.sciencedirect.com/science/article/pii/S0304414919304478},
author = {Xavier Bardina and Juan Pablo Márquez and Lluís Quer-Sardanyons},
keywords = {Brownian sheet, Lévy sheet, Stochastic heat equation, Weak approximation},
abstract = {We consider a Lévy process in the plane and we use it to construct a family of complex-valued random fields that we show to converge in law, in the space of continuous functions, to a complex Brownian sheet. We apply this result to obtain weak approximations of the random field solution to a semilinear one-dimensional stochastic heat equation driven by the space–time white noise.}
}

@article{BARDINARoviraJolis,
title = {Weak approximation of the Wiener process from a Poisson process: the multidimensional parameter set case},
journal = {Statistics \& Probability Letters},
volume = {50},
number = {3},
pages = {245-255},
year = {2000},
issn = {0167-7152},
doi = {https://doi.org/10.1016/S0167-7152(00)00113-9},
url = {https://www.sciencedirect.com/science/article/pii/S0167715200001139},
author = {Xavier Bardina and Maria Jolis and Carles Rovira},
keywords = {-parameter Wiener process, Poisson process, Weak convergence},
abstract = {We give an approximation in law of the d-parameter Wiener process by processes constructed from a Poisson process with parameter in Rd. This approximation is an extension of previous results of Stroock (1982, Topics in Stochastic Differential Equations, Springer, Berlin) and Bardina and Jolis (2000, Bernoulli 4 (6)).}
}

@article{delgadoJolis,
 ISSN = {00219002},
 URL = {http://www.jstor.org/stable/3215716},
 abstract = {We prove that, under rather general conditions, the law of a continuous Gaussian process represented by a stochastic integral of a deterministic kernel, with respect to a standard Wiener process, can be weakly approximated by the law of some processes constructed from a standard Poisson process. An example of a Gaussian process to which this result applies is the fractional Brownian motion with any Hurst parameter.},
 author = {Rosario Delgado and Maria Jolis},
 journal = {Journal of Applied Probability},
 number = {2},
 pages = {400--407},
 publisher = {Applied Probability Trust},
 title = {Weak Approximation for a Class of Gaussian Processes},
 urldate = {2025-06-09},
 volume = {37},
 year = {2000}
}

@article{BARDINAJOLISTUDOR,
title = {Weak convergence to the fractional Brownian sheet and other two-parameter Gaussian processes},
journal = {Statistics \& Probability Letters},
volume = {65},
number = {4},
pages = {317-329},
year = {2003},
issn = {0167-7152},
doi = {https://doi.org/10.1016/j.spl.2003.09.001},
url = {https://www.sciencedirect.com/science/article/pii/S0167715203002578},
author = {Xavier Bardina and Maria Jolis and Ciprian A. Tudor},
keywords = {Fractional Brownian sheet, Weak convergence, Two-parameter Poisson process, Two-parameter Gaussian processes},
abstract = {We give a result of approximation in law, in the space of the continuous functions on [0,1]2, of Gaussian two-parameter processes that can be represented in law as a certain Wiener-type integral. The approximations are constructed from a Poisson process in the plane.}
}

@misc{bardinaboukfal1,
      title={Weak convergence of stochastic integrals}, 
      author={Xavier Bardina and Salim Boukfal},
      year={2025},
      eprint={2504.00733},
      archivePrefix={arXiv},
      primaryClass={math.PR},
      url={https://arxiv.org/abs/2504.00733}, 
}

@misc{bardinaboukfal2,
      title={Weak convergence of stochastic integrals with applications to SPDEs}, 
      author={Xavier Bardina and Salim Boukfal},
      year={2025},
      eprint={2504.08317},
      archivePrefix={arXiv},
      primaryClass={math.PR},
      url={https://arxiv.org/abs/2504.08317}, 
}

@article{BardinaRovirarenewal,
url = {https://doi.org/10.1515/math-2023-0112},
title = {Strong limit of processes constructed from a renewal process},
author = {Xavier Bardina and Carles Rovira},
pages = {20230112},
volume = {21},
number = {1},
journal = {Open Mathematics},
doi = {doi:10.1515/math-2023-0112},
year = {2023},
lastchecked = {2025-06-09}
}

@book{billingsleyantinc,
    keywords = {Mesures de probabilitats},
    language = {eng},
    publisher = {Wiley},
    series = {Wiley series in probability and mathematical statistics},
    title = {Convergence of probability measures },
    year = {1968},
    author = {Billingsley, Patrick.},
    address = {New York},
    booktitle = {Convergence of probability measures},
    isbn = {0471072427},
}

@article{griegoheathmoncayo,
author = {Richard J. Griego and David Heath and Alberto Ruiz-Moncayo},
title = {{Almost Sure Convergence of Uniform Transport Processes to Brownian Motion}},
volume = {42},
journal = {The Annals of Mathematical Statistics},
number = {3},
publisher = {Institute of Mathematical Statistics},
pages = {1129 -- 1131},
year = {1971},
doi = {10.1214/aoms/1177693346},
URL = {https://doi.org/10.1214/aoms/1177693346}
}

@book{stroock1982lectures,
  title={Lectures on topics in stochastic differential equations},
  author={Stroock, Daniel W and Karmakar, Satyajit},
  volume={68},
  year={1982},
  publisher={Springer Berlin}
}

@article{articlekac,
author = {Mark Kac},
title = {{A stochastic model related to the telegrapher's equation}},
volume = {4},
journal = {Rocky Mountain Journal of Mathematics},
number = {3},
publisher = {Rocky Mountain Mathematics Consortium},
pages = {497 -- 510},
year = {1974},
doi = {10.1216/RMJ-1974-4-3-497},
URL = {https://doi.org/10.1216/RMJ-1974-4-3-497}
}

\end{document}